\documentclass{article}
\usepackage{graphicx,amsfonts,amsmath,mathrsfs,amssymb,amsthm,url,color,tikz}
 \usepackage{fancyhdr,indentfirst,bm,enumerate,natbib, float,subfigure}
  \usepackage[colorlinks=true,citecolor=blue]{hyperref}
   \usepackage{dsfont}
\usetikzlibrary{arrows.meta,calc}

   \newcommand{\E}{\mathbb{E}}
    \newcommand{\R}{\mathbb{R}}
     
      \renewcommand{\P}{\mathbb{P}}
  \newcommand{\id}{\mathds{1}}

\newtheorem{theorem}{Theorem}[section]
 
  \newtheorem{lemma}[theorem]{Lemma}
   \newtheorem{proposition}[theorem]{Proposition}
    \newtheorem{definition}{Definition}[section]

        \newtheorem{remark}[theorem]{Remark}

\numberwithin{equation}{section}
 \numberwithin{theorem}{section}
  \numberwithin{table}{section}

\renewcommand{\cite}{\citet}
 \allowdisplaybreaks
  
\begin{document}

\title{Full negative regression dependence: counterexamples for tail dependence and negative association}

\author{
 Yuting Su\thanks{School of Management, University of Science and Technology of China,  China.
 Email: \url{syt20020224@mail.ustc.edu.cn}}
 \and  Taizhong Hu\thanks{School of Management, University of Science and Technology of China,  China. 
 Email: \url{thu@ustc.edu.cn}}      	}

\date{\today}

\maketitle

\begin{abstract}

We show that full negative regression dependence implies neither negative right-tail dependence nor negative left-tail dependence.  A strictly positive distribution on a $2\times2\times3$ rectangular state space provides an exact
counterexample for the right-tail case, and its coordinatewise reflection yields a counterexample for the left-tail case. We further prove that this state-space size is minimal. The counterexample is established via a finite upper-set criterion using exact integer arithmetic. We also resolve the open question of whether full negative regression dependence implies negative association.  A strictly positive distribution on $\{0,1,2\}^4$ satisfies full negative regression dependence yet violates negative association; this  counterexample is verified by $17\,532$ exact integer inequalities. The main results of this paper were first uncovered by the multi-agent mathematical system Eureka and later verified by the authors.
\medskip
		
\noindent \textbf{MSC2000 subject classification}: Primary 60E15; Secondary 62H05.

\noindent \textbf{Key words and phrases}: Negative regression dependence; Negative left-tail dependence; Negative right-tail dependence; Negative association; Stochastic order; Computer-assisted proof.
\end{abstract}

\section{Introduction}

Negative dependence encompasses a key family of probabilistic properties that characterize inverse stochastic relationships between the components of a random vector. Such dependence structures arise naturally in a wide range of research areas, including reliability theory, occupancy problems, balls-and-bins models, order statistics, tournament models, risk management, and operations research; we refer readers to \cite{Leh66}, \cite{AS81}, \cite{JP83}, \cite{BSS82,BSS85}, \cite{BHJK96}, \cite{DR98}, \cite{Pem00}, \cite{HX06}, \cite{PW15}, \cite{MR23}, \cite{KLW24}, \cite{SZH26}, and \cite{ZSH26} for relevant literature and for various concepts of negative dependence. 

Well-established negative dependence concepts include \emph{negative association} (NA) and \emph{full negative regression dependence} (full NRD), as well as their tail-based extensions: \emph{negative right-tail dependence} (NRTD) and \emph{negative left-tail dependence} (NLTD). Specifically, NA enforces a global covariance condition, requiring that increasing functions defined on disjoint coordinate subsets yield non-positive covariance. In contrast, full NRD stipulates that the conditional distribution of one coordinate block decreases in the standard stochastic order when a disjoint coordinate block takes larger fixed values. Differing from NRD, NRTD and NLTD impose upper-tail and lower-tail conditional constraints, respectively, instead of exact point-wise conditioning. Rigorous definitions of NA, NRD, NLTD, and NRTD are presented in Section \ref{sec:definitions}. 
 
It is known that NA does not imply NRD, NLTD or NRTD \citep[][Example 1]{SZH26}, and that NRTD does not imply NRD or NLTD \citep[][Example 4]{SZH26}. The converse direction, however, has remained open for many years. In their foundational work on balls‑and‑bins models, \cite{DR98} claimed that full NRD implies both NRTD and NLTD, and they further raised the open question whether full NRD necessarily yields NA; see Section~5 therein. Later, \cite{HX06} identified a logical gap in the original argument for the tail‑dependence implications. The missing step concerns preservation of regression monotonicity after conditioning on tail events. Subsequent literature has listed these as open conjectures. \cite{SZH26} isolated the unproven intermediate assertion, and \cite{ZSH26} listed both implications as open problems in their survey‑style discussion of negative‑dependence constructions.

This paper answers both families of open questions in the negative. We construct explicit strictly positive distributions on finite rectangular state spaces serving as counterexamples. First, we exhibit a distribution on a $2\times 2\times 3$ state space satisfying full NRD but violating NRTD. Coordinate‑wise reflection yields a counterexample against NLTD. We further establish minimality: any counterexample witnessing the failure of these tail implications must live on a projected rectangular state space of size at least $12$, and our $2\times 2\times 3$ example achieves this lower bound. The underlying mechanism behind the tail‑order reversal is a finite‑state analogue of Simpson’s paradox: regression monotonicity holds within each fixed inner layer, but mixing across layers under tail‑event conditioning destroys the required stochastic order.

Second, we answer the long‑standing open problem posed in \cite{DR98}: full NRD does \emph{not} imply NA. We provide a strictly positive distribution on $\{0,1,2\}^4$ which satisfies full NRD but produces a strictly positive covariance between two increasing functions supported on disjoint coordinate blocks. Verification requires checking $17\,532$ exact integer inequalities. We also derive boundary results describing when full NRD does force NA: every three‑dimensional full‑NRD random vector is automatically NA, and four‑dimensional strictly‑positive full‑NRD distributions with at least one binary coordinate are also NA. Consequently, any four‑dimensional strictly‑positive counterexample must have at least three states per coordinate, making $\{0,1,2\}^4$ coordinate‑wise minimal among rectangular state spaces.

All counterexample validations rely on a finite upper‑set criterion: for integer‑valued random vectors, full NRD reduces to a finite set of cross‑product integer inequalities, avoiding floating‑point arithmetic entirely. The mathematical conjectures solved in this paper were first discovered by the multi‑agent mathematical system Eureka, and all claims are subsequently verified analytically and by independent computer implementations. Supplementary code is provided to ensure computational reproducibility.

The remainder of this paper is organized as follows. Section~\ref{sec:definitions} compiles the required definitions, notation, and the fundamental finite‑state criterion for full NRD. Section~\ref{sec:tailcounterexample} discusses implications for tail‑dependence, presenting the $2\times2\times3$ counterexample, its minimality proof, and the reflected left‑tail counterexample. Section~\ref{sec:counterexample-na} investigates whether full NRD implies NA, and presents the four‑dimensional counterexample along with propositions concerning  dimensional and state‑size bounds.

\section{Definitions and conventions}
\label{sec:definitions}

Recall that a random vector $\bm X=(X_1, \ldots, X_n)$ is said to be smaller than another random vector $\bm Y=(Y_1, \ldots, Y_n)$ in the {\it usual stochastic order}, denoted by $\bm X\le_{\rm st} \bm Y$, if 
\begin{equation}
 \label{eq:st}
    \E [\varphi (\bm X)]\le \E [\varphi(\bm Y)]
\end{equation}
holds for all increasing functions $\varphi$ for which the expectations exist \citep[][Section 4B]{SS07}. On a finite state space, it is equivalent to test the inequality \eqref{eq:st} for indicator functions of upper sets. 

Throughout, we denote by $[\bm X|A]$ any random vector/variable whose distribution is the conditional distribution of $\bm X$ given event $A$. For any $\bm x\in \R^n$ and $J\subset [n]:=\{1, 2, \ldots, n\}$, let $\{X_j, j\in J\}$, $\{X_j\le x_j, j\in J\}$ and $\{X_j>x_j, j\in J\}$ be abbreviated by $\bm X_J$, $\bm X_J\le \bm x_J$ and $\bm X_J> \bm x_J$, respectively.  Conditional distributions are compared only when the conditioning events have positive probability. 
\begin{definition}
{\rm \citep{AS81,JP83}}\ \ Let $\bm X=(X_1, \ldots, X_n)$ be a random vector. $\bm X$ is said to be negatively associated {\rm (NA}) if for every pair of disjoint subsets $A_1, A_2\subset [n]$,
\begin{equation}
 \label{eq:na}
     {\rm Cov} (\psi_1(\bm X_{A_1}), \psi_2(\bm X_{A_2})) \le 0
\end{equation}
whenever $\psi_1$ and $\psi_2$ are coordinate-wise increasing such that the covariance exists.
\end{definition}

\begin{definition}
  \label{def-nrd}
{\rm \citep{DR98}}\ \ Let ${\bm X}=(X_1, \ldots, X_n)$ be a random vector. $\bm X$ is said to be   \vspace*{-5pt}

\begin{itemize}
\item[{\rm (1)}] negatively regression dependent {\rm (NRD)} if
   \begin{equation} 
    \label{eq:nrd}
     \left [\bm X_I| \bm X_J=\bm x_J\right ]\ge_{\rm st} \left [\bm X_I| \bm X_J =\bm x_J^\ast\right ],
   \end{equation}
   where $\bm x_J\le \bm x_J^\ast$, and $I$ and $J$ are any disjoint subsets of $[n]$;

\item[{\rm (2)}] negatively left-tail dependent {\rm (NLTD)} if \eqref{eq:nrd} is replaced by
  \begin{equation} 
     \label{eq:nltd}
    \left [\bm X_I|\bm X_J\le\bm x_J\right ]\ge_{\rm st} \left [\bm X_I|\bm X_J\le\bm x_J^\ast\right ];
  \end{equation}

\item[{\rm (3)}] negatively right-tail dependent {\rm (NRTD)} if \eqref{eq:nrd} is replaced by
  \begin{equation} \label{eq:nrtd}
    \left [\bm X_I|\bm X_J>\bm x_J\right ]\ge_{\rm st}\left [\bm X_I| \bm X_J >\bm x_J^\ast\right ].
  \end{equation}
\end{itemize}
\end{definition}

We use \emph{full} NRD to emphasize that \eqref{eq:nrd} is required for arbitrary disjoint blocks, rather than only for $|J|=1$. These conventions agree with \cite[Definition~1.1]{HX06}; see also the
recent formulations in \cite{SZH26} and \cite{ZSH26}. It should be pointed out that, via a limiting argument, the symbols ``$\le$'' and ``$>$'' in \eqref{eq:nltd} and \eqref{eq:nrtd} can be replaced by ``$<$'' and ``$\ge$'', respectively. 

Coordinate projections preserve full NRD, since every comparison for a projected vector is already one of the comparisons in \eqref{eq:nrd}.

For integer-valued random variables, the following elementary criterion turns full NRD into a finite list of integer inequalities.

\begin{lemma}
 \label{lem:certificate}
Consider a mass function defined on a finite product of chains, with probabilities proportional to positive integer weights $w(\bm x)$.  Fix disjoint nonempty $I,J$, an upper set $U$ in the state space of $\bm X_I$, and a value $\bm a$ of $\bm X_J$.  Define
$$
  w(U,\bm a)=\sum_{\substack{\bm x_I\in U\\ \bm x_J=\bm a}} w(\bm x),   \qquad
    w(\bm a) =\sum_{\bm x_J=\bm a} w(\bm x).
$$
Then the corresponding distribution is full {\rm NRD} if and only if
\begin{equation}\label{eq:slack}
  \Delta(I,J,U,\bm a,\bm b) := w(U,\bm a) w(\bm b) -w(U,\bm b) w(\bm a)\ge 0
\end{equation}
for every such $I,J$, every nontrivial upper set $U$, and every distinct comparable pair $\bm a < \bm b$.
\end{lemma}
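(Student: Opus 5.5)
The proof just unpacks the definition of full NRD on a finite state space, using the upper-set characterization of $\le_{\rm st}$ recalled after \eqref{eq:st}. We fix disjoint nonempty $I,J\subset[n]$ and values $\bm a\le\bm b$ of $\bm X_J$. Because the weights are strictly positive, every conditioning event $\{\bm X_J=\bm a\}$ has positive probability, so every comparison in \eqref{eq:nrd} is active. Let $W=\sum_{\bm x}w(\bm x)$. Then for any upper set $U$ of the state space of $\bm X_I$ (a finite product of chains),
$$\P(\bm X_I\in U\mid \bm X_J=\bm a)=\frac{w(U,\bm a)/W}{w(\bm a)/W}=\frac{w(U,\bm a)}{w(\bm a)}.$$

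Next we invoke the fact stated in the paper that, on a finite state space, $[\bm X_I\mid\bm X_J=\bm a]\ge_{\rm st}[\bm X_I\mid \bm X_J=\bm b]$ holds iff $\P(\bm X_I\in U\mid \bm X_J=\bm a)\ge \P(\bm X_I\in U\mid\bm X_J=\bm b)$ for every upper set $U$. Necessity is immediate, since $\id_U$ is increasing. Sufficiency follows because any increasing $\varphi$ on a finite poset is a nonnegative combination of indicators of upper sets plus a constant: $\varphi=\min\varphi+\sum_k (t_k-t_{k-1})\id_{\{\varphi\ge t_k\}}$, where $t_0<t_1<\cdots$ are the distinct values of $\varphi$, and each $\{\varphi\ge t_k\}$ is an upper set. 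Multiplying the resulting inequality by the positive number $w(\bm a)w(\bm b)$ turns it into exactly $\Delta(I,J,U,\bm a,\bm b)\ge0$.

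It remains to reduce to nontrivial $U$ and distinct $\bm a<\bm b$. If $\bm a=\bm b$, the comparison is trivial. If $U=\emptyset$, both sides of \eqref{eq:slack} are zero. If $U$ is the whole space, then $w(U,\bm a)=w(\bm a)$ and $w(U,\bm b)=w(\bm b)$, so again $\Delta=0$. So both directions of the equivalence hold, quantifying over all disjoint nonempty $I,J$ in the definition. (Empty $I$ or $J$ gives no constraint.)

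The only step needing care is the upper-set characterization of $\le_{\rm st}$. The paper states it, but it is worth recording the layer-cake decomposition above so the proof is self-contained. Everything else is clearing positive denominators, which is exactly what makes the criterion an integer one.
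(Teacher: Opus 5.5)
Your proof is correct and follows the same route as the paper. Like the paper, you clear the positive denominators $w(\bm a)w(\bm b)$ to turn \eqref{eq:slack} into $\P(\bm X_I\in U\mid\bm X_J=\bm a)\ge\P(\bm X_I\in U\mid\bm X_J=\bm b)$, apply the upper-set characterization of $\le_{\rm st}$ on a finite state space, and drop the empty and full $U$ since they give equality; your layer-cake decomposition only makes explicit the characterization that the paper takes as known.
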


\begin{proof}
The denominators $w(\bm a)$ and $w(\bm b)$ are positive.  Dividing \eqref{eq:slack} by their product gives
$$
  \P(\bm X_I\in U\mid \bm X_J=\bm a)   \ge \P(\bm X_I\in U\mid \bm X_J=\bm b).
$$
By the definition of stochastic order, these inequalities for all upper sets are equivalent to the stochastic-order comparison \eqref{eq:nrd}.  Empty and full upper sets give equality and may be omitted.
\end{proof}

\section{Full NRD implies neither NRTD nor NLTD}
\label{sec:tailcounterexample}

\subsection{State spaces on which failure is impossible}

We first recall the classical one-coordinate mixing argument; see \cite[Chapter~5]{BP81}.  A proof is included because the distinction between one and two conditioning coordinates is central to the minimality statement.

\begin{proposition}
 \label{prop:singleton-tail}
Suppose that $\bm X$ is full {\rm NRD}.  Then \eqref{eq:nltd} and \eqref{eq:nrtd} hold whenever the conditioning block $J$ is a singleton, without any restriction on the number of states of that coordinate.
\end{proposition}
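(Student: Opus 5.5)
The plan is to write the tail-conditional law of $\bm X_I$ as a mixture of the point-conditional laws $[\bm X_I\mid X_j=t]$, indexed by the single conditioning coordinate $j$. Full NRD makes these components stochastically ordered in $t$. A one-dimensional likelihood-ratio comparison of the mixing weights then finishes the argument. Fix $J=\{j\}$, a disjoint block $I$, an upper set $U$ in the state space of $\bm X_I$, and thresholds $x<x^\ast$ with positive-probability conditioning events. Define $g(t)=\P(\bm X_I\in U\mid X_j=t)$. By \eqref{eq:nrd} with $J=\{j\}$, $g$ is decreasing on the support of $X_j$. (In general one uses a regular version; on a finite product of chains, as in Lemma~\ref{lem:certificate}, it is just a finite function.) Then
$$
\P(\bm X_I\in U\mid X_j>x)=\E[g(X_j)\mid X_j>x],\qquad \P(\bm X_I\in U\mid X_j>x^\ast)=\E[g(X_j)\mid X_j>x^\ast].
$$

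The key step is that $[X_j\mid X_j>x]\le_{\rm st}[X_j\mid X_j>x^\ast]$. This is elementary. For $s\le x^\ast$, the right-hand survival function $\P(X_j>s\mid X_j>x^\ast)$ equals $1$. For $s>x^\ast$, we have $\P(X_j>s\mid X_j>x)=\P(X_j>s)/\P(X_j>x)\le \P(X_j>s)/\P(X_j>x^\ast)$, because $\P(X_j>x)\ge \P(X_j>x^\ast)$. Truncation of a single variable from the left in fact gives the stronger likelihood-ratio order.

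Since $g$ is decreasing, the definition \eqref{eq:st} applied to the increasing function $-g$ gives $\E[g(X_j)\mid X_j>x]\ge \E[g(X_j)\mid X_j>x^\ast]$. This holds for every upper set $U$, so it is exactly \eqref{eq:nrtd} on a finite space. In general it holds for every increasing $\varphi$: replace $g$ by $\E[\varphi(\bm X_I)\mid X_j=t]$, which is decreasing in $t$ by NRD. The NLTD case \eqref{eq:nltd} is symmetric. Here $[X_j\mid X_j\le x]\le_{\rm st}[X_j\mid X_j\le x^\ast]$ because $\P(X_j\le s\mid X_j\le x)\ge\P(X_j\le s\mid X_j\le x^\ast)$ for all $s$, and the same monotone-$g$ argument applies. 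The number of states of $X_j$ plays no role anywhere.

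The only delicate point is why this works for a singleton and not in general. The mixture representation requires conditioning only on $\{X_j=t\}$ and comparing laws of the one-dimensional index $X_j$. For $|J|\ge2$, the index $\bm X_J$ is multivariate. The truncated laws $[\bm X_J\mid \bm X_J>\bm x_J]$ and $[\bm X_J\mid \bm X_J>\bm x_J^\ast]$ need not be stochastically ordered, since truncating one coordinate shifts the others through their dependence. This is exactly where the argument fails and where the Simpson-type counterexamples arise.

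In the singleton case the main thing to check is measure-theoretic: $g$ should be taken decreasing on the support of $X_j$, and conditioning events should have positive probability. On the finite state spaces of interest this is automatic, so I expect no real obstacle.
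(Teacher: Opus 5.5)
Your proof is correct and follows the paper's argument. Full NRD makes $t\mapsto \E[\varphi(\bm X_I)\mid X_j=t]$ decreasing, the one-dimensional truncations satisfy $[X_j\mid X_j>x]\le_{\rm st}[X_j\mid X_j>x^\ast]$ (and $[X_j\mid X_j\le x]\le_{\rm st}[X_j\mid X_j\le x^\ast]$), and the tower property then gives both the NRTD and NLTD inequalities; your survival-function check of the truncation order and your use of upper-set indicators only spell out steps the paper states directly.
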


\begin{proof}
Write $J=\{j\}$, and fix an increasing function $f$ of $\bm X_I$.  Full NRD implies that $ m(t):=\E [f(\bm X_I)\,|\, X_j=t]$ is decreasing in $t$.  If $s\le t$, then $[X_j\mid X_j>s] \le_{\rm st}[X_j\mid X_j>t]$. Hence,
\begin{align*}
  \E [f(\bm X_I)\,|\, X_j>s] & = \E\Big \{\E [f(\bm X_I)\,|\, X_j] \mid X_j>s\Big \}
  =\E [m(X_j) |X_j>s]  \\
  & \ge \E [m(X_j)\, |\, X_j>t]=\E [f(\bm X_I)\,|\, X_j>t].
\end{align*}
This is NRTD for $J=\{j\}$.  Likewise, $[X_j\mid X_j\le s] \le_{\rm st}[X_j\mid X_j\le t]$ for $s<t$, so the same argument proves NLTD.  
\end{proof}

The following result gives the key structural reason that no counterexample can exist on a binary rectangular state space.

\begin{proposition}\label{thm:binary-block}
Let $\bm X$ be full {\rm NRD}, and let $I,J$ be disjoint index sets.  If every $X_j$, $j\in J$, has at most two states, then both inequalities \eqref{eq:nltd} and \eqref{eq:nrtd} hold for the pair $(I,J)$.
\end{proposition}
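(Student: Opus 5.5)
The plan is to show that when every coordinate in $J$ has at most two states, each tail event $\{\bm X_J>\bm x_J\}$ (or $\{\bm X_J\le \bm x_J\}$) is an exact point event on a sub-block of $J$. After that, the tail comparison becomes a mixture argument that needs only the extreme point of the remaining sub-block, so no Simpson-type reversal can occur.

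\emph{Step 1 (reduction of tail events).} Write the states of $X_j$ as $0_j\le 1_j$; a degenerate coordinate has $0_j=1_j$ and is trivial. For each $j\in J$, the event $\{X_j>x_j\}$ has only three possible forms:
\begin{itemize}
\item it is sure, when $x_j<0_j$;
\item it equals $\{X_j=1_j\}$, when $0_j\le x_j<1_j$;
\item it is empty, when $x_j\ge 1_j$.
\end{itemize}
Since we compare only events of positive probability, the empty case is excluded. Hence $\{\bm X_J>\bm x_J\}=\{\bm X_K=\bm 1_K\}$, where $K=K(\bm x)$ is the set of $j$ with $0_j\le x_j<1_j$. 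If $\bm x_J\le\bm x_J^\ast$ and both events are nonnull, then every coordinate constrained under $\bm x$ is still constrained under $\bm x^\ast$. So $K\subseteq K^\ast$, and we set $L=K^\ast\setminus K$. The NLTD case is dual. There $\{\bm X_J\le\bm x_J\}=\{\bm X_M=\bm 0_M\}$, where $M$ is the set of $j$ with $0_j\le x_j<1_j$. Now $\bm x\le\bm x^\ast$ gives $M\supseteq M^\ast$.

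\emph{Step 2 (mixture argument for NRTD).} We must show $[\bm X_I\mid \bm X_K=\bm 1_K]\ge_{\rm st}[\bm X_I\mid \bm X_{K^\ast}=\bm 1_{K^\ast}]$. Fix an upper set $U$ and decompose by the value $\bm\ell$ of $\bm X_L$:
$$\P(\bm X_I\in U\mid \bm X_K=\bm 1_K)=\sum_{\bm\ell}\P(\bm X_L=\bm\ell\mid \bm X_K=\bm 1_K)\,\P(\bm X_I\in U\mid \bm X_K=\bm 1_K,\bm X_L=\bm\ell),$$
where the sum runs over the $\bm\ell$ with positive probability. Every such $\bm\ell$ satisfies $(\bm 1_K,\bm\ell)\le(\bm 1_K,\bm 1_L)$, because $\bm 1_L$ is the top element of the state space of $\bm X_L$. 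Full NRD applied to the disjoint blocks $I$ and $K^\ast=K\cup L$ therefore bounds each conditional probability below by $\P(\bm X_I\in U\mid \bm X_{K^\ast}=\bm 1_{K^\ast})$. That event has positive probability by assumption. A convex combination of terms each at least this bound is itself at least the bound, which gives \eqref{eq:nrtd}. When $K=\emptyset$ the same computation works with the unconditional law of $\bm X_L$, and when $L=\emptyset$ the two sides are equal.

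\emph{Step 3 (NLTD).} We mirror Step 2 with the roles reversed. Write $L'=M\setminus M^\ast$ and decompose $[\bm X_I\mid \bm X_{M^\ast}=\bm 0_{M^\ast}]$ over the values $\bm\ell$ of $\bm X_{L'}$. Since $\bm 0_{L'}\le\bm\ell$ for every $\bm\ell$, full NRD shows that each component is $\le_{\rm st}[\bm X_I\mid \bm X_M=\bm 0_M]$. The mixture gives $[\bm X_I\mid\bm X_M=\bm 0_M]\ge_{\rm st}[\bm X_I\mid \bm X_{M^\ast}=\bm 0_{M^\ast}]$, which is \eqref{eq:nltd}.

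The only delicate point is Step 1, where we must track which coordinates become constrained as $\bm x$ increases, so as to get the correct inclusion $K\subseteq K^\ast$ or $M\supseteq M^\ast$. The remainder is the Proposition~\ref{prop:singleton-tail} mixing idea. It works here because, on a binary block, the tail event of the more extreme threshold is a single point that dominates, or is dominated by, every mixture component in the coordinatewise order.
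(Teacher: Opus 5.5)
Your proof is correct and follows essentially the paper's route. Both arguments reduce each positive-probability tail event on a binary block to an exact event $\{\bm X_K=\bm 1_K\}$ (resp.\ $\{\bm X_M=\bm 0_M\}$) with $K\subseteq K^\ast$ (resp.\ $M\supseteq M^\ast$), and then conclude by a mixture argument using full NRD on the larger exact block. The only difference is that you mix over all values of $\bm X_L$ at once and compare each cell directly with the extreme cell, whereas the paper adds one coordinate at a time. Summing only over positive-probability $\bm\ell$ also spares you the paper's separate remark about null exact cells.
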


\begin{proof}
Singleton coordinates produce either trivial constraints or null events under tail conditioning and may therefore be dropped from $J$.  Order-preserving changes of scale then allow us to take the state space of each remaining $X_j$, $j\in J$, to be $\{0,1\}$.  Every positive-probability right-tail event has the form $\{X_j=1:j\in S\}$ for some $S\subseteq J$; the other coordinates of $J$ are unconstrained.  If the threshold vector is increased, the set $S$ can only grow.

Fix $S\subseteq J$ and $j\in J\setminus S$.  Let $\mu_0$ and $\mu_1$ be the conditional distributions of $\bm X_I$ given $\bm X_S=\id$ and $X_j=0$ or $1$, respectively, whenever both are well-defined.  Full NRD, applied to the exact conditioning block $S\cup\{j\}$, gives
$$
  \mu_1\le_{\rm st} \mu_0.
$$
The distribution of $\bm X_I$ given $\bm X_S=\id$ is a mixture
$$
  (1-p) \mu_0 + p \mu_1,   \qquad p=\P (X_j=1\,|\, X_S=\id).
$$
It therefore stochastically dominates $\mu_1$, the distribution obtained after imposing the additional condition $X_j=1$.  If one of the two exact events has probability zero, the same conclusion is either equality or the stronger event is inadmissible.  Adding the coordinates of a larger set one at a time proves NRTD.

For left tails, a positive-probability event fixes a subset of the binary coordinates at $0$.  Increasing the left-tail threshold removes such fixations.  The same exact comparison $\mu_0\ge_{\rm st}\mu_1$ shows that the distribution after imposing $X_j=0$ stochastically dominates the mixture before that fixation.  As above, if one exact cell has probability zero, the conclusion is either equality or the stronger tail event is inadmissible.  Iterating proves NLTD.
\end{proof}

\subsection{An exact \texorpdfstring{$2\times2\times3$}{2 x 2 x 3} counterexample}
\label{sec:counterexample}

Let $\bm X=(X_1, X_2, X_3)$ be a random vector such that $X_1, X_2\in\{0,1\}$ and $X_3\in\{0,1,2\}$.  Define its joint distribution by dividing every entry of Table \ref{tab:weights-nrd} by $10^8$.

\begin{table}[ht]
\centering
\caption{Integer weights for the counterexample.  All $12$ entries are positive and sum to $10^8$.} \medskip 
\begin{tabular}{ccrrr}   \hline 
   $X_1$ & $X_2$ & $X_3=0$ & $X_3=1$ & $X_3=2$ \\ \hline 
   $0$ & $0$ & $7$          & $50\,944$   & $2\,832\,845$ \\
   $0$ & $1$ & $8\,799\,436$ & $164\,145$  & $164\,145$ \\
   $1$ & $0$ & $20\,365\,006$& $379\,890$  & $379\,890$ \\
   $1$ & $1$ & $65\,617\,650$& $1\,224\,030$ & $22\,012$ \\  \hline 
\end{tabular}
\label{tab:weights-nrd}
\end{table}

\begin{proposition}\label{prop:counterexample}
The distribution in Table {\rm \ref{tab:weights-nrd}} is full {\rm NRD} but is not {\rm NRTD}.
\end{proposition}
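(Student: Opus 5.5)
The proof has two independent parts. \textbf{Full NRD} is verified by the finite certificate of Lemma~\ref{lem:certificate}. The \textbf{failure of NRTD} is exhibited by one explicit pair of tail events, whose location is forced by the earlier propositions.

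\emph{Full NRD.} I would enumerate every ordered pair of disjoint nonempty index sets $I,J\subset\{1,2,3\}$. For each pair I would list the nontrivial upper sets $U$ of the state space of $\bm X_I$, which is a product of the chains $\{0,1\}$ and $\{0,1,2\}$. Then, for every comparable pair $\bm a<\bm b$ in the state space of $\bm X_J$, I would check $\Delta(I,J,U,\bm a,\bm b)\ge 0$ in exact integer arithmetic.

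The case $|J|=1$ can be reduced to the conditional tables given $X_j=t$. Only the case $|J|=2$ with $|I|=1$ involves a single-coordinate $I$, and there the upper sets are just $\{x_i\ge c\}$. The case $J=\{3\}$, $I=\{1,2\}$ requires all upper sets of the $2\times2$ square, namely $\{(1,1)\}$, $\{(1,0),(1,1)\}$, $\{(0,1),(1,1)\}$, the union of these three, and their complements in the obvious sense.

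This makes a few hundred inequalities in total, which I would organise in a table and certify by computer, with a couple spot-checked by hand. The tight inequalities can be spot-checked first, for instance in the layer $X_1=0$, rows $(0,1)$ and $(1,0)$, where the repeated entries $164\,145$ and $379\,890$ indicate that the example sits on the boundary. This step is routine but is the main bookkeeping burden. It is the obstacle only in the sense that a single sign error would invalidate the claim, so exact integer arithmetic is essential.

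\emph{Failure of NRTD.} Proposition~\ref{prop:singleton-tail} rules out a singleton $J$, and Proposition~\ref{thm:binary-block} rules out $J\subseteq\{1,2\}$. A violation must therefore use $J=\{2,3\}$ or $\{1,3\}$ with $I$ the remaining binary coordinate. I take $I=\{1\}$, $J=\{2,3\}$, and compare the threshold $\bm x_J=(-1,0)$, i.e.\ the event $\{X_3\ge1\}$ with $X_2$ unconstrained, with the larger threshold $\bm x_J^\ast=(0,0)$, i.e.\ the event $\{X_2=1,X_3\ge1\}$.

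Summing the table gives the following counts.
\begin{itemize}
\item Given $X_3\ge1$: the weight with $X_1=1$ is $2\,005\,822$ and with $X_1=0$ is $3\,212\,079$, so $\P(X_1=1\mid X_3\ge1)\approx0.384$.
\item Given $X_2=1,X_3\ge1$: the weights are $1\,246\,042$ and $328\,290$, so $\P(X_1=1\mid X_2=1,X_3\ge1)\approx0.791$.
\end{itemize}

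NRTD with the upper set $U=\{1\}$ would require the first probability to be at least the second. Equivalently, it would require
\[
2\,005\,822\cdot 1\,574\,332\ \ge\ 1\,246\,042\cdot 5\,217\,901 .
\]
This fails by direct integer multiplication.

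A threshold below the support is legitimate, for example by replacing $-1$ with any value less than $0$. Alternatively, the same comparison can be recast with both thresholds inside the support by using the ``$\ge$'' form of the definition noted after Definition~\ref{def-nrd}. This Simpson-type reversal, where mixing over $X_2$ inverts the within-layer monotonicity, completes the proof.
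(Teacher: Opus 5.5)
Your proposal follows the paper's proof: full NRD comes from the exact-integer certificate of Lemma~\ref{lem:certificate}, and the NRTD failure uses the same pair $I=\{1\}$, $J=\{2,3\}$, the same thresholds $(-1,0)\le(0,0)$, and the same comparison $2\,005\,822/5\,217\,901<1\,246\,042/1\,574\,332$. Two small bookkeeping corrections: the certificate is much smaller than ``a few hundred'' inequalities (74 over all comparable pairs, or 56 covering-pair inequalities, all strictly positive with minimum slack $59\,730$), and the nontrivial upper sets of the $2\times2$ square are exactly the four you list first --- their complements are lower sets, for which $\Delta$ has the opposite sign, so they must not be added to the check.
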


\begin{proof}
Apply Lemma \ref{lem:certificate}. Enumerating all ordered disjoint index pairs $(I,J)$ and all nontrivial upper sets yields 56 cover inequalities. Table \ref{tab:certificate} organizes these inequalities by $(I,J)$ and reports the minimal exact cross-product slack within each group.  Every listed minimum is strictly positive, with an overall minimum of $59\,730$.  It follows that all $56$ inequalities are satisfied, so the distribution is full NRD.

\begin{table}[ht]
\centering
\caption{Exact full-NRD certificate.  The labels $0,1,2$ denote $X_1, X_2, X_3$, respectively.} \medskip 
\begin{tabular}{crr@{\qquad}crr}  \hline 
   $I\mid J$ & count & minimum slack & $I\mid J$ & count & minimum slack \\   \hline 
   $0\mid1$  & 1 & $10\,881\,804$       & $0\mid2$  & 2 & $4\,380\,320\,879\,824$ \\
   $0\mid12$ & 7 & $59\,730$            & $1\mid0$  & 1 & $10\,881\,804$ \\
   $1\mid2$  & 2 & $3\,791\,208\,908\,449$ & $1\mid02$ & 7 & $59\,730$ \\
   $2\mid0$  & 2 & $258\,532\,967\,344\,678$ & $2\mid1$  & 2 & $239\,082\,495\,973\,618$ \\
   $2\mid01$ & 8 & $10\,577\,151\,486\,188$ & $01\mid2$ & 8 & $3\,342\,963\,269\,880$ \\
   $02\mid1$ & 8 & $10\,881\,804$       & $12\mid0$ & 8 & $10\,881\,804$ \\  \hline 
\end{tabular}
\label{tab:certificate}
\end{table}

For the right-tail comparison, direct summation gives
\begin{align}
    \P (X_1=1\,|\, X_2\ge 0, X_3\ge 1) &=\frac{2\,005\,822}{5\,217\,901},    \label{eq:broad-tail}\\[3pt]
    \P (X_1=1\,|\, X_2\ge 1, X_3\ge 1) &=\frac{1\,246\,042}{1\,574\,332}.    \label{eq:narrow-tail}
\end{align}
Then $ \P (X_1=1\,|\, X_2\ge 0, X_3\ge 1)<  \P (X_1=1\,|\, X_2\ge 1, X_3\ge 1)$ since
\begin{equation}\label{eq:reversal}
  2\,005\,822\times 1\,574\,332  -1\,246\,042\times 5\,217\,901  =-3\,343\,894\,036\,938 < 0.
\end{equation}
In the strict-threshold notation of \eqref{eq:nrtd}, the two conditioning thresholds for $(X_2, X_3)$ are $(-1,0)\le (0,0)$.  Taking the increasing test function $f(X_1)=X_1$ proves that the distribution is not NRTD.

The enumeration in Table \ref{tab:certificate}, including an additional check of all $74$ upper-set inequalities arising from comparable conditioning-value pairs, is reproduced by the exact-arithmetic ancillary file 
\begin{center}
      \path{anc/nrd-tail-counterexample/verify_counterexample.py}.  
\end{center}
\noindent The script uses only the Python standard library and asserts every count and displayed integer.
\end{proof}

\begin{remark} {\rm (The mixing mechanism)\ \ 
The reversal in \eqref{eq:reversal} is a finite version of Simpson's phenomenon.  Define
$$
  q_{bc}=\P (X_1=1\,|\, X_2=b, X_3=c).
$$
It is easy to see that
\begin{align*}
 q_{01}&=\frac{379\,890}{430\,834}\approx0.88175492
       \ge \frac{1\,224\,030}{1\,388\,175}\approx0.88175482=q_{11},\\[3pt]
 q_{02}&=\frac{379\,890}{3\,212\,735}\approx0.11824505
       \ge \frac{22\,012}{186\,157}\approx0.11824428=q_{12}.
\end{align*}
However, when conditioning on $X_3\ge1$, the weight of the high-$q$ layer $X_3=1$ satisfies
$$
    \P (X_3=1\,|\, X_2=0, X_3\ge 1) =\frac{430\,834}{3\,643\,569}  \approx0.11824505,
$$
whereas
$$
    \P (X_3=1\,|\, X_2=1, X_3\ge 1)=\frac{1\,388\,175}{1\,574\,332}  \approx0.88175493.
$$
Consequently,
\begin{align*}
   \P (X_1=1 | X_2=0, X_3\ge 1) & =q_{01} \P(X_3=1 | X_2=0, X_3\ge 1) +q_{02} \P(X_3=2 | X_2=0, X_3\ge 1)\\
   & \approx 0.20852631  <0.79147346\approx \P (X_1=1\,|\, X_2=1, X_3\ge 1 ).
\end{align*}
This is the NRTD violation in \eqref{eq:broad-tail}--\eqref{eq:reversal}.  A two-point coordinate has no two-layer proper upper tail of this kind: every nontrivial upper tail is a singleton.  
}
\end{remark}

\subsection{Reflection and minimality}

\begin{proposition}\label{prop:reflection}
Let $\bm X= (X_1, X_2, X_3)$ have the distribution in Table {\rm \ref{tab:weights-nrd}}, and set $\bm Y=- \bm X$.  Then $\bm Y$ is full {\rm NRD} but is not {\rm NLTD}.
\end{proposition}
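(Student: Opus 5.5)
The plan is to transfer both properties from $\bm X$ to $\bm Y=-\bm X$ through the order reversal $x\mapsto -x$. For any subsets, negation turns upper sets into lower sets and increasing functions into decreasing ones. It also maps the event $\{\bm X_J=\bm x_J\}$ to $\{\bm Y_J=-\bm x_J\}$ and the right-tail event $\{\bm X_J>\bm x_J\}$ to the left-tail event $\{\bm Y_J<-\bm x_J\}$. The two properties are then handled one at a time.

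\emph{Full NRD of $\bm Y$.} Take disjoint $I,J$ and values $\bm y_J\le \bm y_J^\ast$ of $\bm Y_J$ with positive probability, and put $\bm x_J=-\bm y_J^\ast\le \bm x_J^\ast=-\bm y_J$. Proposition~\ref{prop:counterexample} says $\bm X$ is full NRD, so
$[\bm X_I\mid \bm X_J=\bm x_J]\ge_{\rm st}[\bm X_I\mid \bm X_J=\bm x_J^\ast]$.
Negation reverses the usual stochastic order: if $\bm U\le_{\rm st}\bm V$ then $-\bm U\ge_{\rm st}-\bm V$, since $\varphi(-\cdot)$ is decreasing whenever $\varphi$ is increasing. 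Applying this gives
$[\bm Y_I\mid \bm Y_J=\bm y_J^\ast]\le_{\rm st}[\bm Y_I\mid \bm Y_J=\bm y_J]$,
which is exactly \eqref{eq:nrd} for $\bm Y$. So full NRD is invariant under coordinatewise reflection.

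\emph{Failure of NLTD.} Reflect the explicit witness from the proof of Proposition~\ref{prop:counterexample}. There we found
$\P(X_1=1\mid X_2\ge0,X_3\ge1)<\P(X_1=1\mid X_2\ge1,X_3\ge1)$.
In terms of $\bm Y$, these events are $\{Y_2\le 0,Y_3\le -1\}$ and $\{Y_2\le -1,Y_3\le -1\}$, and $X_1=1$ becomes $Y_1=-1$. So with thresholds $\bm y_J=(-1,-1)\le \bm y_J^\ast=(0,-1)$ for $J=\{2,3\}$ and $I=\{1\}$, we get
$\P(Y_1\le -1\mid \bm Y_J\le \bm y_J)>\P(Y_1\le -1\mid \bm Y_J\le \bm y_J^\ast)$.
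Now take the increasing test function $f(Y_1)=\id\{Y_1>-1\}=1-\id\{Y_1=-1\}$. Then $\E[f\mid \bm Y_J\le\bm y_J]<\E[f\mid \bm Y_J\le \bm y_J^\ast]$, which contradicts \eqref{eq:nltd}, since \eqref{eq:nltd} requires the left side to dominate. The conditioning events have positive probability because all weights are positive. The numerical inequality is literally \eqref{eq:reversal}, so no new computation is needed.

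The only step needing care is the direction bookkeeping: checking that $\bm y_J\le\bm y_J^\ast$ is the correct orientation in \eqref{eq:nltd}, and that the non-strict ``$\le$'' tail events match the strict ``$>$'' events via the limiting remark after Definition~\ref{def-nrd}. On the integer lattice this matching is immediate, since $X_j\ge c$ is the same event as $X_j>c-1$.
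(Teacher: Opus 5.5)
Your proposal is correct and follows the paper's proof essentially step for step. Reflection preserves full NRD because negation reverses $\le_{\rm st}$, and the reflected NRTD witness with $I=\{1\}$, $J=\{2,3\}$ and thresholds $(-1,-1)\le(0,-1)$ violates \eqref{eq:nltd}. The only cosmetic difference is your test function $\id\{Y_1>-1\}$ versus the paper's $Y_1$; these differ by an additive constant, since $Y_1\in\{-1,0\}$.
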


\begin{proof}
Simultaneous coordinate reflection preserves full NRD.  Indeed, if $y_J\le y_J^*$, then $-y_J^*\le-y_J$.  Applying NRD to $\bm X$ and then using the fact that negation reverses the usual stochastic order gives
$$
     [Y_I\,|\, Y_J=y_J^\ast ]\le_{\rm st} [Y_I\,|\, Y_J=y_J].
$$
For the left-tail failure, \eqref{eq:broad-tail}--\eqref{eq:reversal} give
\begin{align*}
 \E [Y_1\,|\, Y_2\le -1, Y_3\le -1]   &=-\frac{1\,246\,042}{1\,574\,332}\\[3pt] 
   & < -\frac {2\,005\,822}{5\,217\,901}     =\E [Y_1\,|\, Y_2\le 0, Y_3\le -1].
\end{align*}
The threshold vectors satisfy $(-1,-1)\le(0,-1)$, while $Y_1$ is an increasing function of the output coordinate.  This contradicts \eqref{eq:nltd}.
\end{proof}

\begin{proposition}\label{cor:minimality}
If a finite full-{\rm NRD} random vector violates  {\rm NRTD} or {\rm NLTD}, then every witnessing projection has rectangular size at least $12$.  This bound is achieved on a $2\times2\times3$ state space.
\end{proposition}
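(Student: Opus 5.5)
We interpret a witnessing projection as follows. A violation of NRTD or NLTD involves a specific pair of disjoint blocks $(I,J)$, thresholds $\bm x_J\le\bm x_J^\ast$, and an upper set in the state space of $\bm X_I$. Project onto the coordinates $I\cup J$. By the remark after Definition~\ref{def-nrd}, this projection is still full NRD, and the violation is visible in it. Its rectangular size is $\prod_{k\in I\cup J} m_k$, where $m_k$ is the number of states of $X_k$ in the projection. The plan is to show this product is at least $12$. Achievability is then immediate: Propositions~\ref{prop:counterexample} and~\ref{prop:reflection} give full-NRD violations of NRTD and NLTD on $2\times2\times3$ state spaces.

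For the lower bound, first discard degenerate coordinates. A one-state coordinate in $I$ contributes nothing to any upper-set comparison. A one-state coordinate in $J$ imposes no constraint, as in the proof of Proposition~\ref{thm:binary-block}. Removing such coordinates does not increase the size, so we may assume every $m_k\ge2$. The two earlier results now constrain $J$:
\begin{itemize}
\item By Proposition~\ref{prop:singleton-tail}, no violation occurs when $|J|=1$, so $|J|\ge2$.
\item By Proposition~\ref{thm:binary-block}, no violation occurs when all coordinates of $J$ are binary, so some $j\in J$ has $m_j\ge3$.
\end{itemize}
Since $I$ is nonempty, $|I|\ge1$, and each coordinate of $I$ has at least two states.

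Combining these facts, the size is at least $m_j\cdot m_{j'}\cdot m_i\ge 3\cdot2\cdot2=12$. Here $j$ is the coordinate of $J$ with at least three states, $j'$ is another coordinate of $J$ with at least two states, and $i$ is any coordinate of $I$.

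The main point needing care is the removal of one-state coordinates: we must check that this leaves both the violation and full NRD intact. Full NRD survives because projection preserves it. The violation survives because conditioning on a constant coordinate's tail event either does nothing or produces a null event, which cannot appear in a witnessing comparison. We also need the disjointness and nonemptiness of $I$ and $J$, which are part of Definition~\ref{def-nrd}. With these checked, the bound follows directly and is attained by Table~\ref{tab:weights-nrd} and its reflection.
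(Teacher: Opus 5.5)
Your proposal is correct and follows the same route as the paper's proof. You project onto $I\cup J$, discard one-point coordinates, use Proposition~\ref{prop:singleton-tail} to force $|J|\ge2$ and Proposition~\ref{thm:binary-block} to force a coordinate of $J$ with at least three states, take the remaining factor $2$ from the output block, and cite Table~\ref{tab:weights-nrd} and its reflection for achievability. The one point the paper states more explicitly is why $I$ stays nonempty after deleting one-state coordinates: a constant $\bm X_I$ gives one-point conditional distributions, which cannot violate the stochastic order. You rely on this only implicitly.
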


\begin{proof}
Let disjoint nonempty blocks $I, J$ witness a failure, and project to $\bm X_{I\cup J}$.  Delete every one-point coordinate, which neither changes the conditioning events nor the conditional output distributions.  If the remaining
conditioning block had only one coordinate, Proposition \ref{prop:singleton-tail} would exclude the failure.  It therefore has at least two coordinates.  By Proposition \ref{thm:binary-block}, at least one of them has at least three marginal states, and another has at least two.

The joint output support of $\bm X_I$ has at least two points, since a one-point conditional distribution cannot violate stochastic order.  Hence the product of the effective marginal support cardinalities over $I$ is at least two.  The
rectangular size of the witnessing projection is therefore at least $2\times 2\times 3=12$. The distribution in Table \ref{tab:weights-nrd} attains this bound for NRTD.
\end{proof}

The minimality statement in Proposition \ref{cor:minimality} refers to rectangular marginal state-space cardinalities, not the minimum number of positive atoms inside a fixed rectangle domain.  The displayed counterexample is strictly positive and uses all $12$ atoms.  No assertion is made here about the sparsest counterexample inside a $2\times2\times3$ grid.

\section{Full NRD does not imply NA}
\label{sec:counterexample-na}

In this section, we show that there exists a random vector $\bm X=(X_1,X_2,X_3,X_4)$ with a strictly positive
mass function on $\{0,1,2\}^4$ such that $\bm X$ is full NRD but is not NA. We also show that full NRD does imply NA for at most three coordinates and, in the four-coordinate case, whenever one coordinate is binary.

\subsection{The exact counterexample}
\label{sect-na-1}
 
Index the rows and columns of Table \ref{tab:weights}, in the displayed order, by the two-coordinate states
$00, 01, 02, 10$, $11, 12, 20, 21, 22$. The row is $(X_1,X_2)$ and the column is $(X_3,X_4)$.  Define the mass of an atom to be its table entry $w(\bm x)$ divided by $99\,999$.  All $81$ entries are positive, and direct summation gives
\begin{equation}
 \label{eq:total-mass}
     \sum_{\bm x\in\{0,1,2\}^4} w(\bm x)=99\,999.
\end{equation}
 
\begin{table}[ht]
\centering
\small 
\setlength{\tabcolsep}{6.5pt}
\caption{Positive integer weights for the full-NRD, non-NA distribution. The rows are indexed by $(X_1,X_2)$ and the columns by $(X_3,X_4)$.}   \vskip 8pt 
\begin{tabular}{c@{\quad}rrrrrrrrr}      \hline 
\quad  & $00$ &  $01$ & $02$ & $10$ & $11$ & $12$ & $20$ & $21$ & $22$ \\    \hline 
   $00$ &    1 &   93 &  959 &   93 & 1708 &  397 &  959 & 397 & 209 \\
   $01$ &   93 &  572 & 1783 &  572 & 2247 &  450 & 1783 & 450 & 238 \\
   $02$ &  959 & 1783 & 3918 & 1783 & 5065 &  857 & 3918 & 857 & 439 \\
   $10$ &   93 &  572 & 1783 &  572 & 2247 &  450 & 1783 & 450 & 238 \\
   $11$ & 1708 & 2247 & 5065 & 2247 & 8193 & 1275 & 5065 & 1275 & 659 \\
   $12$ &  397 &  450 &  857 &  450 & 1275 &  187 &  857 & 187 &  93 \\
   $20$ &  959 & 1783 & 3918 & 1783 & 5065 &  857 & 3918 & 857 & 439 \\
   $21$ &  397 &  450 &  857 &  450 & 1275 &  187 &  857 & 187 &  93 \\
   $22$ &  209 &  238 &  439 &  238 &  659 &   93 &  439 &  93 &   1 \\   \hline 
\end{tabular}
\label{tab:weights}
\end{table}
 
We next describe the complete NRD audit.  The numbers of upper sets of $\{0,1,2\}^k$, including the empty and full sets, are 
\begin{equation}
  \label{eq:upper-counts}
       u_1=4,\qquad u_2=20,\qquad u_3=980.
\end{equation}
The enumeration is recursive: an upper set in $\{0,1,2\}^k$ is specified by three upper sets $U_0\subseteq U_1\subseteq U_2$ in $\{0,1,2\}^{k-1}$, one for each value of the last coordinate.  The number of strict comparable pairs in $\{0, 1, 2\}^k$ is
\begin{equation}
   \label{eq:comparable-counts}
         c_k= 6^k-3^k,\qquad (c_1, c_2, c_3)=(3, 27, 189),
\end{equation}
because one coordinate admits six ordered pairs $(a, b)$ with $a\le b$, of which three are equal.

For block sizes $i=|I|$ and $j=|J|$, the number of slacks in
\eqref{eq:slack} is
\[
  \binom{4}{i}\binom{4-i}{j}(u_i-2)c_j.
\]
The complete breakdown is shown in Table \ref{tab:certificate-counts}.

\setcounter{table}{1}

\begin{table}[ht]
\centering
\caption{Number of exact full-NRD inequalities, grouped by block sizes.} \vskip 8pt
\begin{tabular}{ccc@{\qquad}ccc}    \hline 
   $|I|$ & $|J|$ & count & $|I|$ & $|J|$ & count \\    \hline 
   $1$ & $1$ & $72$    & $1$ & $2$ & $648$ \\
   $1$ & $3$ & $1\,512$ & $2$ & $1$ & $648$ \\
   $2$ & $2$ & $2\,916$ & $3$ & $1$ & $11\,736$ \\   \hline 
   \multicolumn{5}{r}{Total} & $17\,532$ \\  \hline 
\end{tabular}
\label{tab:certificate-counts}
\end{table}

\begin{proposition}\label{prop:strict-nrd}
The distribution in Table {\rm\ref{tab:weights}} is full {\rm NRD}.  More precisely, all $17\,532$ slacks in \eqref{eq:slack} are positive, and their minimum is $3\,958$.
\end{proposition}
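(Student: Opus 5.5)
The proof will be a direct application of Lemma~\ref{lem:certificate}, carried out as an exact finite computation. Since all $81$ weights in Table~\ref{tab:weights} are positive integers, every marginal weight $w(\bm a)$ is positive. Full NRD is therefore equivalent to the nonnegativity of the integer slacks $\Delta(I,J,U,\bm a,\bm b)$ in \eqref{eq:slack}. The ranges are: every ordered pair of disjoint nonempty blocks $I,J\subseteq[4]$; every nontrivial upper set $U$ of $\{0,1,2\}^{|I|}$; and every strictly comparable pair $\bm a<\bm b$ in $\{0,1,2\}^{|J|}$. The statement claims more, namely that all slacks are strictly positive with minimum $3\,958$. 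The plan is to enumerate them exhaustively and record the minimum.

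\textbf{Step 1: count the inequalities.} Generate the upper sets of $\{0,1,2\}^k$ for $k=1,2,3$ using the recursive description $U_0\subseteq U_1\subseteq U_2$, which gives the counts $u_k$ in \eqref{eq:upper-counts}. Then list the comparable pairs, whose count is $c_k=6^k-3^k$ as in \eqref{eq:comparable-counts}. The constraint $|I|+|J|\le4$ leaves the six block-size types of Table~\ref{tab:certificate-counts}. Summing $\binom4i\binom{4-i}{j}(u_i-2)c_j$ over these types confirms the total $17\,532$, so the enumeration is complete.

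\textbf{Step 2: compute the slacks.} For each instance, compute $w(U,\bm a)$ and $w(\bm a)$ by summing table entries in integer arithmetic, form the cross product $\Delta$, and check $\Delta>0$ while tracking the minimum. This is exactly what an exact-arithmetic script, analogous to the one used for Proposition~\ref{prop:counterexample}, would do and assert. The paper's presentation would report the minimum slack within each $(|I|,|J|)$ group, in the manner of Table~\ref{tab:certificate}.

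\textbf{Step 3: use the symmetries to cut the work.} The table is invariant under the swap $X_1\leftrightarrow X_2$ and the swap $X_3\leftrightarrow X_4$. It also appears invariant under the exchange of the pair $(X_1,X_2)$ with $(X_3,X_4)$, since the matrix is symmetric. Finally, the reflection $\bm x\mapsto(2,2,2,2)-\bm x$ appears to map the table to itself: the corner entries are $1$ and $1$, and the entries $209,238,\dots$ match. Reflection maps an NRD inequality for $(U,\bm a,\bm b)$ to the one for the complementary upper set with the roles of $\bm a$ and $\bm b$ reversed. These symmetries justify reporting only orbit representatives and serve as a cross-check on the script. The full proof, however, should rest on the exhaustive enumeration.

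\textbf{Main obstacle.} The main obstacle is correctness of the enumeration rather than any conceptual difficulty. The $|I|=3$ upper sets are the hardest part: there are $978$ nontrivial upper sets of $\{0,1,2\}^3$, which account for $11\,736$ of the inequalities. I would validate the generator first. It should reproduce $u_3=980$, and each output set should be checked independently to be closed upward.
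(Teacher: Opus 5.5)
Your proposal is correct and is essentially the paper's proof: an exhaustive exact-integer check of all $17\,532$ slacks from Lemma~\ref{lem:certificate}, enumerated with the counts of Table~\ref{tab:certificate-counts}, where the minimum $3\,958=677\times 990-752\times 886$ occurs at $I=\{1\}$, $J=\{2,3,4\}$, $\bm a=(0,2,2)$, $\bm b=(1,2,2)$, $U=\{x_1\ge 1\}$. One correction to your Step~3, which is not needed for the proof: the reflection $\bm x\mapsto(2,2,2,2)-\bm x$ is \emph{not} a symmetry of Table~\ref{tab:weights} (for example, $w(0,0,0,2)=959$ but $w(2,2,2,0)=439$), so do not use it to prune to orbit representatives or as a script cross-check; only the swaps $X_1\leftrightarrow X_2$, $X_3\leftrightarrow X_4$ and $(X_1,X_2)\leftrightarrow(X_3,X_4)$ are genuine symmetries.
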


\begin{proof}
The exact-arithmetic ancillary verifier is
\begin{center}
  \path{anc/full-nrd-not-na/verify.py}.
\end{center}
It generates the upper sets by the recursion preceding \eqref{eq:comparable-counts}, loops over every ordered pair of disjoint nonempty blocks, and evaluates \eqref{eq:slack} for every nontrivial upper set and strict comparable conditioning pair.  Its loop counts agree with Table \ref{tab:certificate-counts}.  All operands are integers.

The smallest slack in \eqref{eq:slack} occurs for $I=\{1\}$, $J=\{2,3,4\}$, $\bm a=(0,2,2)$, $\bm b=(1,2,2)$, and $U=\{x_1\ge1\}$. For this comparison the two conditional numerator-denominator pairs are $677/886$ and $752/990$, so the exact cross product is
\begin{equation*}
 \label{eq:minimum-slack}
  677\times 990 -752\times 886 = 3\,958 >0.
\end{equation*}
The script asserts both this value and the total count $17\,532$, and aborts on any nonpositive slack.  Hence every inequality required by Lemma \ref{lem:certificate} holds, which proves full NRD.
\end{proof}

The failure of NA is visible in one exact calculation.

\begin{proposition}\label{prop:na-failure}
The distribution in Table {\rm\ref{tab:weights}} is not {\rm NA}.
\end{proposition}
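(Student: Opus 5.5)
To prove that the distribution of Table~\ref{tab:weights} is not NA, I will exhibit one explicit pair of disjoint blocks and two increasing indicator functions with strictly positive covariance. By definition \eqref{eq:na}, a single violation suffices.

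\emph{Choice of witness.} The table has a visible symmetry: the weights are invariant under swapping $X_1\leftrightarrow X_2$ and $X_3\leftrightarrow X_4$, and under swapping the pairs $(X_1,X_2)\leftrightarrow(X_3,X_4)$. This makes the balanced split $A_1=\{1,2\}$, $A_2=\{3,4\}$ the natural choice. Full NRD already controls every comparison with a singleton block. For $|A_1|=|A_2|=2$ and upper sets $U,V$ of $\{0,1,2\}^2$, full NRD only forces each conditional probability $\P(\bm X_{A_1}\in U\mid \bm X_{A_2}=\bm a)$ to be decreasing in $\bm a$. It places no restriction on the mixing weights, and that is where NA can fail.

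I will take $\psi_1=\id\{\bm X_{12}\in U\}$ and $\psi_2=\id\{\bm X_{34}\in V\}$. The test then reduces to the integer inequality
\[
  W\cdot w(U\times V) - w(U\times\{0,1,2\}^2)\,w(\{0,1,2\}^2\times V) > 0,
\]
where $W=99\,999$ is the total mass from \eqref{eq:total-mass} and $w(\cdot)$ denotes the sum of table entries over the indicated set. Every quantity is a sum of entries of Table~\ref{tab:weights}, so the check is exact integer arithmetic.

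\emph{Search and verification.} There are only $18$ nontrivial upper sets of $\{0,1,2\}^2$, hence $324$ candidate pairs $(U,V)$. I would scan all of them and report one pair with positive slack. A plausible first guess is a middle-type upper set such as $U=V=\{\bm x:\ x_1+x_2\ge 2\}$, or $\{\bm x\ge(1,1)\}$. The heavy entry $8193$ at $(11,11)$, together with the small corner weights $1$ at $(00,00)$ and $(22,22)$, suggests positive association concentrated around the central cell. Once a witness is found, I will display the row sums, column sums and joint sum restricted to $U$ and $V$, and the resulting positive cross-product, in the same style as \eqref{eq:reversal}. I will also point to an ancillary script that asserts this value.

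\emph{Main obstacle.} The real difficulty is locating a witnessing pair, since it need not be one of the obvious candidates. If no pair of indicators on the $\{1,2\}\,|\,\{3,4\}$ split works, I would extend the search to $|A_1|=1$, $|A_2|=3$, and then to non-indicator increasing functions. On a finite space, however, the covariance is bilinear in $(\psi_1,\psi_2)$, and every increasing function is a nonnegative combination of upper-set indicators plus a constant. So if indicators on the other splits also fail, then no increasing functions on those splits violate NA. The finite indicator search is therefore exhaustive.
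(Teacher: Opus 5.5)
Your method is the paper's: a positive covariance between indicators of increasing events on the split $\{1,2\}\mid\{3,4\}$, certified by the integer slack $W\,w(U\times V)-w(U\times\{0,1,2\}^2)\,w(\{0,1,2\}^2\times V)$. The gap is that you never carry it out. For a statement of this kind, the witness and its arithmetic \emph{are} the proof. A plan to scan $324$ pairs and ``report one'' does not yet show that any pair has positive slack, and your closing paragraph concedes that finding it is the real difficulty.

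Your second guess closes the gap: $U=V=\{\bm x\ge(1,1)\}$, i.e.\ $A=\{X_1\ge1,X_2\ge1\}$ and $B=\{X_3\ge1,X_4\ge1\}$. The rows $11,12,21,22$ sum to $w(A)=27\,734+4\,753+4\,753+2\,409=39\,649$, which equals $w(B)$ by the transpose symmetry you observed. The corresponding $4\times4$ block gives $w(A\cap B)=11\,402+1\,742+1\,742+846=15\,732$. Finally, $99\,999\cdot15\,732-39\,649^2=1\,141\,067>0$. This is exactly the paper's proof.

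Do not treat your two guesses as interchangeable. For your first candidate, $U=V=\{x_1+x_2\ge2\}$, one finds $w(U)=78\,807$ and $w(U\times V)=60\,276$. The slack is $99\,999\cdot60\,276-78\,807^2=-183\,003\,525<0$. In fact, by the exhaustive audit in Remark~\ref{rem:reproducibility}, the pair $\{\bm x\ge(1,1)\}\times\{\bm x\ge(1,1)\}$ is the \emph{only} one of the $9\,252$ upper-event pairs on disjoint blocks with positive covariance. Your heuristic about the heavy central cell therefore has to be backed by this explicit computation.

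Your fallback to $|A_1|=1$, $|A_2|=3$ is also redundant, by your own earlier observation. The argument of Proposition~\ref{prop:three-coordinates} gives nonpositive covariance whenever one block is a singleton. So only the splits $\{1,3\}\mid\{2,4\}$ and $\{1,4\}\mid\{2,3\}$ would have been worth searching.
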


\begin{proof}
Consider the increasing events on disjoint blocks $A=\{X_1\ge1,\ X_2\ge1\}$ and $B=\{X_3\ge1,\ X_4\ge1\}$. Summing the corresponding entries in Table \ref{tab:weights} yields  $w(A)=w(B)=39\,649$ and $A\cap B)=15\,732$. Together with \eqref{eq:total-mass}, this gives
\begin{align}
\label{eq:positive-covariance}
  99\,999\,w(A\cap B)-w(A)w(B) &=99\,999\times 15\,732-39\,649^2 =1\,141\,067>0.
\end{align}
After division by $99\,999^2=9\,999\,800\,001$, \eqref{eq:positive-covariance} is precisely 
$$
   \mathrm{Cov} (\id_A, \id_B)=\frac {1\,141\,067}{9\,999\,800\,001} >0, 
$$
contradicting \eqref{eq:na}.
\end{proof}

\begin{remark}\label{rem:reproducibility}
{\rm 
The expanded weights are also stored in the ancillary file
\begin{center}
  \path{anc/full-nrd-not-na/weights.txt}.
\end{center}
Its SHA-256 digest is the concatenation of the following two lines:
\begin{center}
\texttt{cf4bca92dfcbc3a899fc040d930919b1}\\
\texttt{927520045731c445ddb02d844e5623b1}.
\end{center}
Two independent Python implementations and an independent C++ implementation using \texttt{\_\_int128} arithmetic are included in the same ancillary directory.  An exhaustive audit of all $9\,252$ upper-event pairs on disjoint blocks finds exactly one positive covariance and no zero covariance; the positive pair is the pair in Proposition \ref{prop:na-failure}. This uniqueness is not needed for the counterexample.  }
\end{remark}

\subsection{Why smaller coordinate structures are safe}

The counterexample in Subsection \ref{sect-na-1} is four-dimensional. Does such a counterexample exist for a three-dimensional random vector? Proposition \ref{prop:three-coordinates} below gives a negative answer.

\begin{proposition}
 \label{prop:three-coordinates}
A three-dimensional random vector $\bm X$ with full {\rm NRD} property is {\rm NA}.
\end{proposition}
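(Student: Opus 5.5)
In three dimensions there are few pairs of disjoint nonempty blocks $A_1,A_2\subset\{1,2,3\}$. Up to symmetry, either both blocks are singletons, e.g.\ $(\{1\},\{2\})$, or one is a singleton and the other a pair, e.g.\ $(\{1\},\{2,3\})$. Adding the unused coordinate does not change the covariance in \eqref{eq:na}, and coordinate projections preserve full NRD. So the singleton–singleton case reduces to a bivariate vector $(X_i,X_j)$ that is NRD, while the singleton–pair case uses all three coordinates. The plan is to show that in both cases the covariance can be written as a covariance of one increasing and one decreasing function, which is nonpositive.

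The key tool is the conditioning identity
$$
\mathrm{Cov}\big(\psi_1(\bm X_{A_1}),\psi_2(\bm X_{A_2})\big)=\mathrm{Cov}\big(m(\bm X_{A_2}),\psi_2(\bm X_{A_2})\big),\qquad m(\bm x_{A_2}):=\E[\psi_1(\bm X_{A_1})\mid \bm X_{A_2}=\bm x_{A_2}].
$$
Full NRD, applied with $I=A_1$ and $J=A_2$, says that $m$ is coordinatewise decreasing in $\bm x_{A_2}$. We always condition on the larger block $A_2$.

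\emph{Singleton–singleton case.} Here $A_2=\{j\}$, so $m$ is a decreasing function of the single variable $X_j$ and $\psi_2$ is increasing in $X_j$. Chebyshev's covariance inequality for one real random variable then gives $\mathrm{Cov}(m(X_j),\psi_2(X_j))\le 0$.

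\emph{Singleton–pair case.} Take $A_1=\{1\}$ and $A_2=\{2,3\}$. Now $m(x_2,x_3)$ is decreasing in each coordinate and $\psi_2(x_2,x_3)$ is increasing. This is the main obstacle: opposite monotonicity of two functions of a bivariate vector $(X_2,X_3)$ does not make their covariance nonpositive in general. It requires $(X_2,X_3)$ to be positively associated, which is false here, since $(X_2,X_3)$ is itself NRD.

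To get around this, I would reverse the roles and condition on the singleton instead, writing the covariance as $\mathrm{Cov}\big(\psi_1(X_1),\,g(X_1)\big)$ with $g(t):=\E[\psi_2(X_2,X_3)\mid X_1=t]$. Full NRD with $I=\{2,3\}$ and $J=\{1\}$ gives $[(X_2,X_3)\mid X_1=t]$ decreasing in $t$ in the multivariate usual stochastic order. Since $\psi_2$ is increasing, $g$ is decreasing. Then $\psi_1(X_1)$ is increasing and $g(X_1)$ is decreasing in the same single variable, and Chebyshev's inequality again gives a nonpositive covariance.

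This is why the block-version of NRD (with $|I|=2$) is essential. It also explains why three dimensions suffice: among disjoint pairs of blocks in $\{1,2,3\}$, at least one block is a singleton, and we condition on it. In four dimensions a $2|2$ split has no singleton block, and the argument breaks, consistent with the counterexample of Proposition \ref{prop:na-failure}. The remaining routine step is integrability: for bounded $\psi$'s on a finite support everything is immediate, and in general I would truncate the increasing functions and pass to the limit.
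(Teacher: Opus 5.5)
Your final argument is exactly the paper's proof: condition on the singleton block, use full NRD with the other block as output to get $t\mapsto\E[\psi_2(\bm X_{A_2})\mid X_i=t]$ decreasing, and conclude with the tower property and Chebyshev's inequality. One small point: your aside that $(X_2,X_3)$ fails to be positively associated is overstated (independent coordinates are both NRD and positively associated), but it plays no role in the argument.
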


\begin{proof}
Fix disjoint nonempty blocks $I,J$ and increasing functions $f$ and $g$. Since $|I|+|J|\le3$, one of the blocks is a singleton. By symmetry of covariance, suppose $I=\{i\}$, and denote $h(t)=\E [g(\bm X_J)\,|\, X_i=t]$. Full NRD makes $h$ decreasing.  By the tower property and Chebychev's inequality, we have
$$
  \mathrm{Cov} (f(X_i),g(X_J)) = \mathrm{Cov} (f(X_i),h(X_i))\le 0.
$$
This proves \eqref{eq:na} for every pair of blocks.
\end{proof}

The four-dimensional case reduces to a covariance lemma for two random variables, one of which is a Bernoulli random variable. 

\begin{lemma}
 \label{lem:binary-chain}
Let $(Y,Z)$ be a two-dimensional random vector with $\P(Y=1)=p=1-\P(Y=0)$. Suppose the function $\eta:\R^2\mapsto \R$ satisfies the following:
\begin{itemize}
    \item[{\rm (1)}] for each fixed $y$, the mapping $z \mapsto \eta(y, z)$ is decreasing;
    \item[{\rm (2)}] $\E [\eta(Y, Z)\,|\, Y = 0] \ge \E [\eta(Y, Z)\,|\, Y = 1]$;
    \item[{\rm (3)}] the function $z \mapsto \E [\eta (Y, Z)\,|\, Z = z]$ is decreasing.
\end{itemize}
Then $\mathrm{Cov} (\eta(Y,Z), g(Y,Z)) \le 0$ for every increasing real-valued function $g$.
\end{lemma}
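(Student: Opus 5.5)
The plan is to prove the inequality first for indicators of upper sets and then pass to general increasing $g$. On $\{0,1\}\times\R$, every increasing $g$ that takes finitely many values is a constant plus a nonnegative combination of indicators $\id_U$ of upper sets $U$. General $g$ then follows by monotone approximation, provided the covariances exist. Covariance is linear in $g$ and unchanged by adding constants, so it suffices to show $\mathrm{Cov}(\eta(Y,Z),\id_U(Y,Z))\le 0$ for every upper set $U$. Such a $U$ has the form
$$
 U=\{Y=1,\ Z\ge s\}\cup\{Y=0,\ Z\ge t\},\qquad s\le t
$$
(with $\ge$ possibly replaced by $>$, and with $s$ or $t$ possibly $\pm\infty$). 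Hence
$$
\id_U=\id\{Y=1\}\id\{Z\ge s\}+\id\{Y=0\}\id\{Z\ge t\}=:h_Y(Z),
$$
where $h_0=\id\{\cdot\ge t\}$ and $h_1=\id\{\cdot\ge s\}$ are increasing and $h_1\ge h_0$.

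The first step is the law of total covariance conditioned on $Y$:
$$
\mathrm{Cov}(\eta,\id_U)=\E\bigl[\mathrm{Cov}(\eta(Y,Z),h_Y(Z)\mid Y)\bigr]+\mathrm{Cov}\bigl(\E[\eta\mid Y],\,\E[h_Y(Z)\mid Y]\bigr).
$$
For fixed $y$, $\eta(y,\cdot)$ is decreasing by (1) and $h_y$ is increasing, so Chebyshev's inequality makes the first term nonpositive. Because $Y$ is binary, the second term equals
$$
p(1-p)\bigl(\E[\eta\mid Y=1]-\E[\eta\mid Y=0]\bigr)\bigl(\P(Z\ge s\mid Y=1)-\P(Z\ge t\mid Y=0)\bigr).
$$
By (2) the first bracket is $\le 0$. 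So the whole covariance is nonpositive whenever $\P(Z\ge s\mid Y=1)\ge \P(Z\ge t\mid Y=0)$; this covers in particular the case in which the law of $Z$ given $Y$ increases in $Y$.

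The main obstacle is the remaining case $\P(Z\ge s\mid Y=1)<\P(Z\ge t\mid Y=0)$, and this is where hypothesis (3) must enter. Here I would use the alternative decomposition
$$
\id_U=\id\{Z\ge t\}+\id\{Y=1\}\id\{s\le Z<t\}.
$$
The first piece is handled by conditioning on $Z$: $\mathrm{Cov}(\eta,\id\{Z\ge t\})=\mathrm{Cov}(\E[\eta\mid Z],\id\{Z\ge t\})\le0$ by (3) and Chebyshev. For the second piece the plan is to interpolate between the two representations, by writing $\id_U$ as a convex-type combination of $h_Y(Z)$ with a modified threshold and $\id\{Z\ge t'\}$ for a suitable $t'\in[s,t]$. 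The threshold $t'$ would be chosen so that the conditional tail probabilities given $Y=0$ and $Y=1$ balance, so that each part is covered either by the $Y$-conditioning argument or by the $Z$-conditioning argument. I expect choosing this split, and checking that the leftover cross term $\mathrm{Cov}(\eta,\id\{Y=1\}\id\{s\le Z<t\})$ is absorbed, to be the delicate step. If the interpolation fails, the fallback is a direct two-by-two computation. One conditions on the three $Z$-regions $\{Z<s\}$, $\{s\le Z<t\}$ and $\{Z\ge t\}$, and combines (2) and (3) as linear inequalities in the conditional means of $\eta$ on these cells, with (1) supplying the ordering within each cell.
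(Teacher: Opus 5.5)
\textbf{There is a genuine gap: the remaining case, the only one that needs hypothesis (3), is not proved.}

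Your reduction to upper sets $U=\{Y=1,Z\ge s\}\cup\{Y=0,Z\ge t\}$ is correct. So is the law-of-total-covariance step, which settles the case $\P(Z\ge s\mid Y=1)\ge\P(Z\ge t\mid Y=0)$ using only (1) and (2). In the remaining case, the interpolation threshold $t'$ is never specified, and the fallback only describes a computation. Moreover, your decomposition $\id_U=\id\{Z\ge t\}+\id\{Y=1,\,s\le Z<t\}$ does not close on its own, because the band term can have strictly positive covariance with $\eta$ in exactly your remaining case. Take $p=1/2$ and $Z\in\{0,1,2\}$, with conditional probabilities $(0.8,0.1,0.1)$ given $Y=1$ and $(0.05,0.05,0.9)$ given $Y=0$. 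Let $\eta\equiv 1$ except $\eta(1,2)=-10$, and take $s=1$, $t=2$. Then (1)--(3) hold and $\P(Z\ge 1\mid Y=1)=0.2<0.9=\P(Z\ge 2\mid Y=0)$. Also $\E\eta=0.45$, so $\mathrm{Cov}(\eta,\id\{Y=1,Z=1\})=0.05\times 0.55>0$. Your dichotomy on tail probabilities of $Z$ is not the one that makes (3) usable.

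\textbf{The missing idea.} Benchmark everything against $\E\eta$, split on the sign of a one-slice tail covariance, and let (1) propagate that sign along the slice. Two facts hold for every $a$:
\begin{itemize}
\item $\mathrm{Cov}(\eta,\id\{Y=1,Z\ge a\})\le 0$, since $\E[\eta\mid Y=1,Z\ge a]\le\E[\eta\mid Y=1]\le\E\eta$ by (1) and (2);
\item $\mathrm{Cov}(\eta,\id\{Z\ge a\})\le 0$, by (3) and Chebyshev.
\end{itemize}
The paper splits on the sign of $\mathrm{Cov}(\eta,\id\{Y=0,Z\ge t\})$.
\begin{itemize}
\item If it is $\le 0$, add it to the first fact with $a=s$.
\item If it is $>0$, then $\E[\eta\mid Y=0,Z\ge t]>\E\eta$. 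By (1) the band $\{Y=0,\,s\le Z<t\}$ then also has conditional mean above $\E\eta$, so its covariance with $\eta$ is $\ge 0$. Writing $\id_U=\id\{Z\ge s\}-\id\{Y=0,\,s\le Z<t\}$ and using the second fact finishes.
\end{itemize}
The correction band lies in the $Y=0$ slice and is subtracted, which is what lets the sign transfer.

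\textbf{Your decomposition can be completed by the mirror-image split.} Split on the sign of $\mathrm{Cov}(\eta,\id\{Y=1,Z<s\})$.
\begin{itemize}
\item If it is negative, then $\E[\eta\mid Y=1,Z<s]<\E\eta$. By (1) your $Y=1$ band term is then nonpositive, and (3) handles $\id\{Z\ge t\}$.
\item Otherwise, pass to the complement $\{Y=1,Z<s\}\cup\{Y=0,Z<t\}$. Both of its pieces have nonnegative covariance with $\eta$; the second because $\E[\eta\mid Y=0,Z<t]\ge\E[\eta\mid Y=0]\ge\E\eta$.
\end{itemize}
In either version, your total-covariance case becomes unnecessary.
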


\begin{proof} 
Define $g (y,z) = \E [\eta(Y,Z)] - \eta(y, z)$. Then $g(y,z)$ is increasing in $z$ for each $y$.  Assumption (2), combined with the identity $\E [g(Y,Z)]= 0$, implies
$$
   \E [g(Y,Z)\, |\, Y = 1] =p\, \E [\eta(Y, Z)\,|\, Y =1] + (1-p)\,\E [\eta(Y, Z)\,|\, Y = 0] -\E [\eta(Y, Z)\,|\, Y =1]   \ge 0,
$$
or, equivalently, $\E \left [g(Y,Z)\id_{\{Y=1\}}\right ]\ge 0$. Consequently, 
\begin{equation}
\label{eq-260915}
    \E \left [g(Y,Z)\id_{\{Y=1,Z\ge a\}}\right ]\ge 0,\quad a\in\R,
\end{equation}
since $g(y,z)$ is inceasing in $z$ for each $y$. By assumption (3), it follows that
$$
    \E [g(Y,Z)\,|\, Z = z] = \E [\eta(Y,Z)]- \E [\eta(Y, Z)\,|\, Z = z]
$$
is increasing in $z$, and $\E [g(Y,Z)\,|\, Z] $ has mean zero. Hence, $\E [g(Y,Z)\,|\, Z\ge a]\ge 0$ and $\E \left [g(Y,Z) \id_{\{Z\ge a\}}\right ]\ge 0$ for all $a\in \R$.

Every upper set of $\{0, 1\} \times \R$ has the form
$$
    U_{a,b} = \{Y = 1, Z \ge a\} \cup \{Y = 0, Z \ge b\}, \quad -\infty\le a \le b\le \infty.
$$
Thus,
$$
    \E \left [g(Y,Z) \id_{U_{a,b}}\right ] =\E \left [g(Y,Z)\id_{\{Y=1,Z\ge a\}}\right ]+ \E\left [g(Y,Z)\id_{\{Y=0,Z\ge b\}}\right ].
$$
Two cases arise.
\begin{itemize}
  \item If $\E \left [g(Y,Z)\id_{\{Y=0,Z\ge b\}}\right ] \ge 0$, it follows that $\E \left [g(Y,Z) \id_{U_{a,b}}\right ]\ge 0$ in view of \eqref{eq-260915}. 
      
  \item If $\E\left [g(Y,Z)\id_{\{Y=0,Z\ge b\}}\right ] < 0$, by the monotonicity of $g(y,z)$ in $z$ for each $y$, we obtain 
     $$
            \E [g(Y,Z)\,|\, Y=0, a\le Z< b] \le \E [g(Y,Z)\,|\, Y=0,Z\ge b] <0,
     $$
     implying $\E\left [g(Y,Z)\id_{\{Y=0,a\le Z< b\}}\right ]<0$. Thus,
     $$
         \E \left [g(Y,Z) \id_{U_{a,b}}\right ] = \E \left [g(Y,Z) \id_{\{Z\ge a\}}\right ] - \E\left [g(Y,Z) \id_{\{Y=0, a\le Z< b\}}\right ] \ge \E \left [g(Y,Z) \id_{\{Z\ge a\}}\right ] \ge 0.
     $$
\end{itemize}
Therefore, $\operatorname{Cov}(\eta(Y,Z), \id_U) = -\E [g(Y,Z) \id_U] \le 0$ for every upper set $U$. On a finite poset, every increasing function is, after subtraction of a constant, a non-negative linear combination of upper-set indicators. The claimed inequality follows.
\end{proof}

\begin{proposition}\label{prop:four-binary}
Let $\bm X=(X_1,X_2,X_3,X_4)$ have a strictly positive mass function and be full {\rm NRD}.  If one coordinate $X_i$ is binary, then $\bm X$ is {\rm NA}.
\end{proposition}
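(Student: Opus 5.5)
We must check NA for every pair of disjoint blocks $I,J$ in $[4]$. If either block is a singleton, the argument of Proposition \ref{prop:three-coordinates} already applies. Indeed, with $I=\{i\}$, full NRD makes $h(t)=\E[g(\bm X_J)\mid X_i=t]$ decreasing, and Chebyshev's inequality gives $\mathrm{Cov}(f(X_i),g(\bm X_J))\le 0$. So the only remaining case is $|I|=|J|=2$. After relabeling, we may take $I=\{1,2\}$ and $J=\{3,4\}$, with the binary coordinate equal to $X_1\in\{0,1\}$. We fix increasing functions $f(X_1,X_2)$ and $\psi(X_3,X_4)$ and aim to show $\mathrm{Cov}(f(X_1,X_2),\psi(X_3,X_4))\le 0$.

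The plan is to apply Lemma \ref{lem:binary-chain} with $Y=X_1$, $Z=X_2$, $g=f$, and
$$
 \eta(y,z)=\E[\psi(X_3,X_4)\mid X_1=y,\,X_2=z].
$$
Strict positivity ensures every conditioning event has positive probability. By the tower property, $\mathrm{Cov}(f(X_1,X_2),\psi(X_3,X_4))=\mathrm{Cov}(f(Y,Z),\eta(Y,Z))$, so the lemma gives the conclusion once its three hypotheses are checked. Each hypothesis follows from full NRD with output block $\{3,4\}$ and a suitable conditioning block.
\begin{itemize}
\item[(1)] For fixed $y$, the map $z\mapsto\eta(y,z)$ is decreasing. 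This is NRD with $J=\{1,2\}$, comparing $(y,z)\le(y,z')$.
\item[(2)] $\E[\eta(Y,Z)\mid Y=0]=\E[\psi\mid X_1=0]\ge \E[\psi\mid X_1=1]$. This is NRD with $J=\{1\}$.
\item[(3)] $\E[\eta(Y,Z)\mid Z=z]=\E[\psi\mid X_2=z]$ is decreasing in $z$. This is NRD with $J=\{2\}$, again by the tower property.
\end{itemize}

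The main point to be careful about is that the lemma's internal use of $g$ is a dummy renaming: its proof introduces its own $g=\E\eta-\eta$ and then derives the covariance bound for every upper-set indicator, and hence for every increasing function. I will therefore state the application as $\mathrm{Cov}(\eta(Y,Z),f(Y,Z))\le 0$ for our increasing $f$. I should also confirm that the lemma needs no positivity beyond well-defined conditionals, which strict positivity supplies.

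Finally, the labeling step must be justified for every placement of the binary coordinate. In the $2$--$2$ case the binary coordinate belongs to one of the two blocks. By the symmetry of covariance, we may call that block $I$ and order it so that the binary coordinate comes first. Every $2$--$2$ split is then covered, which completes the proof.
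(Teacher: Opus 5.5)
Your proposal is correct and follows essentially the same route as the paper. Block pairs containing a singleton are handled by the Chebyshev argument of Proposition~\ref{prop:three-coordinates}, and the $2$--$2$ split is handled by Lemma~\ref{lem:binary-chain} with $Y$ the binary coordinate and $\eta$ the conditional mean of the other block's function, whose three hypotheses follow from full NRD with conditioning blocks $\{1,2\}$, $\{1\}$ and $\{2\}$ (your remark that the lemma's proof reuses the symbol $g$ correctly identifies a harmless notational clash).
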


\begin{proof}
Coordinate projection preserves full NRD.  Thus every pair of disjoint blocks using at most three coordinates is covered by Proposition \ref{prop:three-coordinates}.  It remains to consider partitions of all four coordinates.  A singleton-versus-three-coordinate partition is handled by the proof of Proposition \ref{prop:three-coordinates}, so only a two-versus-two partition needs attention.

For such a partition, let $\bm X_{1,2}=(X_1,X_2)$ be the side containing the binary coordinate, say $X_1$, and let $\bm X_{3,4}=(X_3, X_4)$ be another side.  By symmetry of covariance, it is enough to consider an increasing function $f$ of $\bm X_{3,4}$ and an increasing function $g$ of $\bm X_{1,2}$.  Set
$$
  \eta(y,z) = \E [f(\bm X_{3,4})\,|\, X_1=y, X_2=z].
$$
Full NRD with conditioning block $\bm X_{1,2}$ shows that $\eta(y,z)$ is decreasing in $z$ for each fixed $y$.  Full NRD with conditioning block $X_1$ gives
$$
  \E [\eta(\bm X_{1,2})\,|\, X_1=0]\, \ge\, \E [\eta (\bm X_{1,2})\,|\, X_1=1],
$$
and conditioning on $X_2$ shows that $\E [\eta(\bm X_{1,2})\,|\, X_2=z]$ is decreasing in $z$.  Hence, Lemma \ref{lem:binary-chain} applies, and the tower property yields
$$
  \mathrm{Cov} (f(\bm X_{3,4}), g(\bm X_{1,2})) = \mathrm{Cov} (\eta(\bm X_{1,2}), g(\bm X_{1,2}))\le 0.
$$
This proves NA for the last remaining type of block pair.
\end{proof}

\begin{remark}\label{rem:minimality-scope}
{\rm 
By Propositions \ref{prop:three-coordinates} and \ref{prop:four-binary}, any strictly positive four-coordinate counterexample must contain at least three states in each coordinate. The state space $\{0,1,2\}^4$ for the counterexample in Subsection \ref{sect-na-1} is coordinate-wise minimal among four-coordinate rectangular state spaces. This, 
however, does not imply that $81$ is the minimal total rectangular size for counterexamples with an arbitrary number of coordinates, nor does it identify the minimal number of positive atoms when zero masses are allowed.
}
\end{remark}

\section*{Funding}

T. Hu, the corresponding author,  would like to acknowledge financial support from National Natural Science Foundation of China (No. 72332007, 12371476).  

\section*{Acknowledgement}
	
The authors thank Tianyang Sun of the School of Mathematics Sciences, University of Science and Technology of China, for using the Eureka system to produce a preliminary version of the proof. Eureka is a multi-agent system for resolving mathematical conjectures through human-AI interaction. 

\section*{Disclosure statement}

No potential conflict of interest was reported by the authors.

\end{document}